\newtheorem{theorem}{Theorem}
\newtheorem{lemma}{Lemma}
\begin{document}
\title{On Geodesic Triangles in $\mathbb{H}^2$}
\author{Rita Gitik}
\address{ Department of Mathematics \\ University of Michigan \\ Ann Arbor, MI, 48109}
\email{ritagtk@umich.edu}
\date{\today}

\begin{abstract}
Let $M$ be an orientable hyperbolic surface without boundary and let $\gamma$ be a closed geodesic in $M$.
We prove that any side of any triangle formed by distinct lifts of $\gamma$ in $\mathbb{H}^2$ is shorter then $\gamma$.
\end{abstract}

\keywords{Hyperbolic Surface, Poincar\'e Disc, Geodesic, Quasi-Isometry, Triangle, Tree, Free Group}
\subjclass[2010]{Primary: 51M10; Secondary: 53C22, 30F45, 20E05}

\maketitle

\section{Introduction}

Behaviour of closed geodesics in hyperbolic surfaces has been a fruitful subject of research for many years. Such geodesics are often studied by looking at their lifts in covering spaces of the surface, cf. \cite{F-H-S}, \cite{Ga}, \cite{Gr}, \cite{H-S2}, \cite{H-S3}, \cite{NC},  and \cite{Gi}. In this paper we  consider three geodesics in $\mathbb{H}^2$ which are lifts of the same closed geodesic $\gamma$ in an orientable hyperbolic surface without boundary, cf. \cite{H-S1} and \cite{Gi}. We prove that if these three geodesics intersect to form a triangle then each side of that triangle is shorter than
$\gamma$. In contrast, a triangle in $\mathbb{H}^2$ formed by three arbitrary geodesic lines can have sides of any length.

The main result of this paper is the following theorem.

\begin{theorem}
Let $M$ be an orientable hyperbolic surface without boundary and let $\gamma$ be a closed geodesic in $M$. Any side of any triangle formed by distinct lifts of $\gamma$ in $\mathbb{H}^2$ is shorter then $\gamma$.
\end{theorem}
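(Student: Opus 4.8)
The plan is to work in the universal cover and exploit the fact that, unlike three arbitrary geodesics, three lifts of a single closed geodesic are permuted by a large group of isometries. Write $M=\mathbb{H}^2/\Gamma$, where $\Gamma\cong\pi_1(M)$ is a torsion-free Fuchsian group (discrete and orientation-preserving, since $M$ is an orientable surface without boundary). The geodesic $\gamma$ is represented by a hyperbolic conjugacy class, and the lifts of $\gamma$ to $\mathbb{H}^2$ are exactly the axes of the conjugates of a fixed hyperbolic element; along each such axis $L$ the element representing $\gamma$ translates by the common distance $\ell:=\mathrm{length}(\gamma)$. Since $\Gamma$ permutes the whole family of lifts, I may apply a deck transformation carrying any prescribed side onto a side lying on a fixed lift $L_1$; thus it suffices to bound the side lying on $L_1$. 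Let $g_1\in\Gamma$ be the element with axis $L_1$ and translation length $\ell$, and denote the triangle by $ABC$ with $B,C\in L_1$, so the side $BC$ lies on $L_1$ and the apex $A=L_2\cap L_3$ lies off $L_1$.

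The geometric engine I would use is the following elementary fact: if $m$ is any geodesic crossing $L_1$, then for $n\neq 0$ the translate $g_1^{\,n}m$ is \emph{disjoint} from $m$. Indeed $m$ and $g_1^{\,n}m$ meet $L_1$ at the same angle at two distinct points, and the ``equal corresponding angles'' computation in the upper half-plane shows that the corresponding semicircles are nested, hence disjoint; in particular the orbit $\{g_1^{\,n}L_2\}_{n\in\mathbb{Z}}$ is a pairwise disjoint family whose members cross $L_1$ at the equally spaced points $g_1^{\,n}C$, consecutive ones a distance $\ell$ apart. Now assume for contradiction that $\mathrm{length}(BC)\ge \ell$, and orient $L_1$ so that $g_1$ translates from $C$ toward $B$. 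Then $g_1C$ lies on the segment $BC$, so the lift $g_1L_2$ crosses the base of the triangle and enters the convex interior of $ABC$. By the disjointness fact $g_1L_2$ cannot meet $L_2\supseteq CA$ a second time, and it meets $L_1\supseteq BC$ only at $g_1C$; since a geodesic entering a geodesic triangle must leave through one of the three sides, $g_1L_2$ is forced to exit across the remaining side $AB$, i.e.\ to cross $L_3$ at an interior point of $AB$.

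The main obstacle is that this entering-and-exiting configuration is \emph{not} contradictory at the level of three lines alone: two distinct lifts are allowed to cross once, and one checks that the crossing $g_1L_2\cap L_3$ is geometrically consistent. The sharp bound must therefore come from the \emph{global} structure of the lift pattern, and this is the step I expect to be delicate. My intended route is to iterate: the disjoint periodic families $\{g_1^{\,n}L_2\}$ and $\{g_1^{\,n}L_3\}$ cut $\mathbb{H}^2$ into a pattern whose combinatorics is governed by a tree on which $\langle g_1\rangle$ acts, while the compact side $AB$ can meet this pattern only finitely often because the self-intersection locus of $\gamma$ is finite in $M$ and its $\Gamma$-preimage is discrete in $\mathbb{H}^2$. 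Weighing this crossing count against the periodic spacing $\ell$ is what should turn ``the base is at least $\ell$'' into an impossibility; upgrading the mere boundedness supplied by discreteness to the optimal constant $\ell$ is the crux of the argument, and is where the tree and quasi-isometry description of the lifts does the real work. Finally, to obtain the \emph{strict} inequality I would treat the borderline case $\mathrm{length}(BC)=\ell$ separately: there $g_1C=B$, which forces a coincidence among the translated vertices and sides that degenerates the triangle, so equality cannot occur and every side is strictly shorter than $\gamma$.
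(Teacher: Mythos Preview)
Your proposal identifies the right difficulty and then stops precisely at it. You set up the picture correctly, and your disjointness fact about the family $\{g_1^{\,n}L_2\}$ is valid and even appears implicitly in the paper (via ``$n$, $f(n)$, $f^2(n)$ make the same angle with $l$''). But you yourself say that the crossing $g_1L_2\cap L_3$ is \emph{not} a contradiction, and that ``upgrading the mere boundedness supplied by discreteness to the optimal constant $\ell$ is the crux of the argument, and is where the tree and quasi-isometry description of the lifts does the real work.'' That work is never done: you neither specify which tree, nor state any lemma about axes in it, nor produce a counting inequality that forces the bound $\ell$. As written, all you have is that the side $AB$ meets the periodic pattern finitely often, which gives no quantitative constraint at all. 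Your borderline argument is also incomplete: from $g_1C=B$ you only get that $g_1L_2$ and $L_3$ pass through the same point $B$, not that they coincide, so ``degenerates the triangle'' is unjustified.

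For comparison, the paper fills exactly this gap with concrete ingredients you do not supply. It first reduces to a non-compact $M$ with finitely generated (hence free) fundamental group by passing to the cover corresponding to $\langle \alpha, g, h\rangle$; this is what makes a specific tree available. That tree $T$ is the dual to a tessellation of $\mathbb{H}^2$ by a fundamental $2n$-gon cut out by infinite simple geodesics $x_1,\dots,x_n$, i.e.\ the Cayley tree of the free group. The algebraic heart is Lemma~2: two axes in $T$ whose overlap has length $\ge L(W)-1$ must coincide, where $W$ is the cyclically reduced word for $\gamma$. The contradiction is then a crossing count with the lifts of the $x_i$: the segment of $l$ from $p$ to $f^2(p)$ must be crossed exactly $2L(W)$ times, but Lemma~2 bounds the overlaps $s(l)\cap s(n)$ and $s(l)\cap s(m)$ by $L(W)-1$ each, forcing at most $a+b+c<2L(W)$ crossings. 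Your sketch contains none of these three steps (reduction to free $\pi_1$, the axis-overlap lemma, the $2L(W)$ counting), and without them the argument does not close.
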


Theorem 1 is related to the results giving lower bounds on the angles of intersections of closed geodesics in hyperbolic surfaces, cf. \cite{Gil}. 
 
An important tool in studying geodesics in $\mathbb{H}^2$ is a tree $T$ in $\mathbb{H}^2$, defined in Section 2 of this paper,
cf.  \cite{H-S1}, pp. 111-112 and \cite{Gi}. It was shown in \cite{Gi} that no analogue of Theorem 1 holds in $T$.
 
The proof of Theorem 1 is given at in Section 5 of this paper. In order to prove Theorem 1, we need to prove the following special case first.

\begin{lemma}
Let $M$ be an orientable  non-compact hyperbolic surface without boundary which has finitely generated fundamental group and let $\gamma$ be a closed geodesic in $M$. Any side of any triangle formed by distinct geodesic lines in  the preimage of $\gamma$ in $\mathbb{H}^2$ is shorter then $\gamma$.
\end{lemma}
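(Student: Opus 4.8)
The plan is to work inside the Fuchsian group. Since $M$ is orientable, non-compact, without boundary and has finitely generated fundamental group, the covering group $\Gamma$ with $M=\mathbb{H}^2/\Gamma$ is a free group of finite rank acting freely, properly discontinuously and preserving orientation on $\mathbb{H}^2$. The closed geodesic $\gamma$ is the projection of the axis of a primitive hyperbolic element $g\in\Gamma$, and $\ell:=\ell(\gamma)$ is the translation length of $g$. The geodesic lines of the preimage $p^{-1}(\gamma)$ are precisely the axes of the conjugates $hgh^{-1}$, $h\in\Gamma$; each such line $L$ is preserved by a hyperbolic isometry $g_L\in\Gamma$ of translation length $\ell$, and $\Gamma$ permutes this family of lines. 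I record the two facts I will lean on: distinct lines of $p^{-1}(\gamma)$ never share an endpoint on $\partial\mathbb{H}^2$ (two hyperbolic elements of a discrete group with a common fixed point have equal axes), and the family $p^{-1}(\gamma)$ is locally finite in $\mathbb{H}^2$.

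I would argue by contradiction. Suppose three distinct lines $L_1,L_2,L_3$ bound a triangle and that the side carried by $L_1$, between $P_2=L_1\cap L_3$ and $P_3=L_1\cap L_2$, has length $\ge\ell$. Let $g_1\in\Gamma$ be the isometry preserving $L_1$ with translation length $\ell$, oriented so that it moves $P_2$ toward $P_3$. Since the side is at least as long as the translation length, $g_1P_2$ lies on the segment $[P_2,P_3]$, so the line $g_1L_3\in p^{-1}(\gamma)$ crosses $L_1$ at an interior point of that side; as $g_1$ preserves orientation, $g_1L_3$ enters the same half-plane as the triangle. The heart of the proof is to follow this intruding line: being a complete geodesic that enters the triangle through its base, $g_1L_3$ must leave through $L_2$ or through $L_3$, and I want to show that the resulting crossing pattern is impossible. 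Here I would pass to the tree $T$ of Section 2 via the associated quasi-isometry: the lines $L_1,L_2,L_3$ together with the translate $g_1L_3$ determine a finite subconfiguration of $T$, and the fact that $[P_2,P_3]$ is long forces, in that subconfiguration, a coincidence or a shared endpoint among lines of $p^{-1}(\gamma)$ --- contradicting the two facts recorded above.

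The main obstacle is exactly the step of converting the combinatorial picture in $T$ into a contradiction in $\mathbb{H}^2$. This cannot be done by tree combinatorics alone: as noted in the introduction and in \cite{Gi}, the analogue of the theorem fails in $T$, so long ``sides'' are perfectly possible at the level of the tree. What must be used is the way $T$ is embedded in $\mathbb{H}^2$ and the genuine hyperbolic geometry of the configuration --- in particular the transversality of the crossings (the lower bounds on intersection angles in the spirit of \cite{Gil}) together with the hyperbolic law of cosines, which turn the bound ``the base is at least $\ell$'' into a statement about the translate $g_1L_3$ that the discreteness and local finiteness of $p^{-1}(\gamma)$ forbid. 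I expect the bulk of the work, and all of the real difficulty, to sit at this metric-versus-combinatorial interface; once a single over-long side is excluded, the same argument applies verbatim to each of the three sides.
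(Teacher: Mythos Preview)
Your setup is essentially the same as the paper's: argue by contradiction, let the side on $L_1$ have length at least $\ell$, and push the opposite line along $L_1$ by the stabilizing translation. The gap is that you stop exactly where the real argument begins. You yourself say that the ``metric-versus-combinatorial interface'' is the main obstacle and that you ``expect the bulk of the work'' to sit there; but you then offer only allusions to angle bounds and the hyperbolic law of cosines, neither of which is what actually closes the argument. As you correctly note, the tree $T$ alone cannot produce the contradiction, so your proposal as written is an outline, not a proof.

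Here is what the paper does at that point, and it is quite concrete. With $p=l\cap n$, $q=l\cap m$ and $f$ the hyperbolic isometry stabilizing $l$ with translation length $\ell$, one translates by $f^2$ rather than $f$: since $|pq|>\ell$ one has $|q\,f^2(p)|<|pq|$, and because $n$ and $f^2(n)$ meet $l$ at the same angle (being $f$-translates) while the vertical angles of $m$ at $q$ are equal, the line $m$ must intersect $f^2(n)$. This is the geometric input that replaces your vague appeal to the law of cosines. Now one counts: every lift of the cutting arcs $x_i$ that crosses $l$ between $p$ and $f^2(p)$ must, on the triangle side of $l$, exit through $n$, through $m$, or through $f^2(n)$. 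The number of such lifts crossing $l$ and $n$ is the overlap length of the corresponding axes in $T$, hence at most $L(W)-1$ by Lemma~2; similarly for $l$ and $m$; and the count for $l$ and $f^2(n)$ to the left of $f^2(p)$ is, by $f$-equivariance, the same as for $l$ and $n$ to the left of $p$. Summing gives strictly fewer than $2L(W)$ crossings, whereas the segment of $l$ from $p$ to $f^2(p)$ has length $2\ell$ and therefore meets exactly $2L(W)$ lifts of the $x_i$. That is the contradiction. Your single translate $g_1L_3$ entering the triangle is a step in the right direction but does not by itself set up this counting; the passage to $f^2$ and the use of Lemma~2 as an overlap bound are the missing ideas.
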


Note that a single-punctured hyperbolic sphere has a trivial fundamental group, so it does not have closed geodesics, hence Lemma 1 is vacuously true in this case.

A twice-punctured sphere  is homeomorphic to an annulus, so its fundamental group is infinite cyclic. Hence the preimage in $\mathbb{H}^2$ of any closed geodesic in a two-punctured sphere consists of a single geodesic line. It follows that in this case there are no triangles in $\mathbb{H}^2$ which satisfy the hypothesis of Lemma 1, so Lemma 1 is vacuously true in this case.

The proof of Lemma 1 for a hyperbolic single-punctured torus is given in \cite{Gi}.

The proof of Lemma 1 for all remaining cases is given in Section 4 of this paper.

\section{The Tree T in $\mathbb{H}^2$}

Excellent  expositions of hyperbolic geometry can be found in \cite{Be}, \cite{Le}, \cite{Ra}, \cite{Scott}, and \cite{St}.

Let $M$ be an orientable non-compact hyperbolic surface without boundary which has finitely generated fundamental group. Let the genus of $M$ be $k$ and let $l$ be the number of punctures in $M$. Let $n=2k+l-1$. There exist infinite simple disjoint geodesics $x_1, \cdots x_n$ in $M$ such that $M$ cut along the union of $x_i,1 \le i \le n$,  is an open two-dimensional disk $D$.  Also there exist closed geodesics  $y_1, \cdots , y_n$ in $M$  such that $x_i \cap y_i=$point and $x_i \cap y_j =\varnothing$ for $i \neq j$, which generate the fundamental group of $M$. Note that the fundamental group of $M$  is a free group of rank $n$. The universal cover of $M$ is the hyperbolic plane $\mathbb{H}^2$, so $M$ is the quotient of $\mathbb{H}^2$ by the action of $\pi_1(M)$. 

Let $\widetilde{D}$ be a lift of the disc $D$ to $\mathbb{H}^2$. Note that $\widetilde{D}$ is a $2n$-gon in $\mathbb{H}^2$.

Recall that an end of a surface without boundary and finitely generated fundamental group is homeomorphic to a product $S^1 \times [0,\infty )$. Hyperbolic surfaces without boundary and finitely generated fundamental group have two kinds of ends: a cusp end, which has finite area, and a flare end, which has infinite area. If all the ends of $M$ are cusps then $\widetilde{D}$ is an ideal $2n$-gon in $\mathbb{H}^2$. The action of $\pi_1(M)$ on $\mathbb{H}^2$ creates a tessellation of $\mathbb{H}^2$ by the translates of the closure of $\widetilde{D}$. 

Let $T$ be the graph in $\mathbb{H}^2$ dual to that tessellation, i.e. the vertices of $T$ are located one in each translate of $\widetilde{D}$, and each edge of $T$ connects two vertices of $T$ in adjacent copies of $\widetilde{D}$, so each edge of $T$ intersects just one lift of one $x_i$ in one point. As 
$\mathbb{H}^2$ is simply connected, $T$ is a tree. The tree $T$ can be considered to be the Cayley graph of the group $\pi_1(M)$ which is a free group of rank $n$ generated by the set $y_1, \cdots , y_n$. Define the distance $d_T(v,u)$ between two vertices $v$ and $u$ of $T$ to be the number of edges in a shortest path in $T$ connecting $v$ and $u$.

Any element $f$ of $\pi_1(M)$ acts on $T$ leaving invariant a unique geodesic line, called the axis of $f$. The vertices of this geodesic line are characterized as those which minimize $d_T(v, f(v))$. That minimum is called the translation length of $f$, and is equal to the length of the word $W$ in 
$\pi_1(M)= \langle y_1, \cdots ,y_n \rangle$, obtained from $f$ by reduction and cyclic reduction. Denote the length of the word $W$ in $\pi_1(M)$ by 
$L(W)$. Note that each oriented edge of $T$ is labeled by one of the generators $\{ y_i, 1 \le i \le n \}$ or their inverses $\{ y_i^{-1}, 1 \le i \le n \}$, so each oriented path in $T$ is labeled by a word in $\{ y_i, 1 \le i \le n \}$ and $\{ y_i^{-1}, 1 \le i \le n \}$ .

The following result is a generalization of Lemma 1 in \cite{Gi}. We include the proof for the sake of completeness  of this paper.

\begin{lemma}
 Let $f$ be an element in $\pi_1(M)= \langle y_1, \cdots , y_n \rangle$ and let $W$ be its reduced and cyclically reduced conjugate.
Consider two axes in the tree $T$ stabilized by $f$ and its conjugate $f'\in \pi_1(M)$. If those axes intersect in an interval labeled with a word $W_0$ such that $L(W_0)=L(W)-1$ then they coincide.
\end{lemma}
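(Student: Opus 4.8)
\section*{Proof proposal (plan)}

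The plan is to argue entirely inside the tree $T$, using the fact recalled above that $T$ is the Cayley graph of the free group $\pi_1(M)=\langle y_1,\dots,y_n\rangle$ and that the translation length of a hyperbolic element equals $L(W)$, where $W$ is its reduced and cyclically reduced conjugate. Since $f$ and $f'$ are conjugate, reading the edge labels along either axis in its direction of translation yields the same bi-infinite $m$-periodic word $W^{\infty}=\cdots WWW\cdots$, where $m=L(W)$; the two readings differ only by a phase shift and possibly by orientation. It is convenient to apply a label-preserving left translation so that the first axis is the axis of $W$ itself and the second is a translate of it, still read forwards by $W^{\infty}$. Write the shared interval as a segment $\sigma$ with $m-1$ edges and $m$ vertices. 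Comparing the two readings of $\sigma$ splits the argument into two cases according to whether the translation directions agree along $\sigma$ (the aligned case) or are opposite.

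In the aligned case, the $m-1$ oriented edges of $\sigma$ force the equalities $w_r=w_{r+d}$ (cyclic indices mod $m$) for every residue $r$ except the single residue $r_0$ corresponding to the one edge of a period that is not contained in $\sigma$; here $d$ denotes the phase difference between the two axes. The key step is to upgrade these $m-1$ equalities to all $m$ of them. The shift $r\mapsto r+d$ is a permutation of $\mathbb{Z}/m\mathbb{Z}$ whose orbits are closed cycles, so deleting the single surviving relation at $r_0$ still leaves every vertex of the cycle through $r_0$ joined to the others by the remaining relations; hence $w$ is constant on each cycle and $w_r=w_{r+d}$ holds for all $r$. Thus $W^{\infty}$ is invariant under the shift by $d$, and since the two axes share $\sigma$ and differ only by this shift, their labels agree at all matching positions and the two geodesic lines coincide. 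If $W$ happens to be a proper power the same computation merely produces the finer period $\gcd(d,m)$, and coincidence again follows.

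In the opposite case, traversing $\sigma$ against the translation direction of the second axis replaces its reading by the reverse–inverse word, so matching the two readings yields anti-palindromic relations $w_j=w_{S-j}^{-1}$ for $m-1$ consecutive indices $j$, where $S$ is a fixed constant. I would analyze the involution $\tau\colon j\mapsto S-j$ of $\mathbb{Z}/m\mathbb{Z}$. The single index $j_0$ whose relation is not directly available either fails to be a fixed point of $\tau$, in which case the equation at $S-j_0$ (which is present) is literally the same relation, so all $m$ relations hold; or $j_0$ is a fixed point. A short case check on the parities of $m$ and $S$ then shows that in every situation at least one \emph{surviving} relation has the form $w_k=w_k^{-1}$ (at a fixed point of $\tau$ distinct from $j_0$) or $w_k=w_{k+1}^{-1}$ (at a cyclically adjacent pair). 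The first contradicts the fact that $\pi_1(M)$ is free and hence torsion-free ($w_k^2\neq e$), and the second contradicts $W$ being reduced and cyclically reduced (no two cyclically adjacent letters are mutually inverse). Hence the opposite case cannot occur.

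Combining the two cases, a shared interval of length $m-1$ is possible only in the aligned case and there it forces the two axes to coincide, which is the assertion of Lemma 2. I expect the main obstacle to be the opposite-orientation case: keeping the bookkeeping of the anti-palindromic relations correct across the parities of $m$ and $S$, and checking in each subcase that the contradiction comes from a genuinely present relation rather than the one missing relation. The cycle-closing step in the aligned case is the other delicate point, but it becomes transparent once phrased in terms of the shift permutation of $\mathbb{Z}/m\mathbb{Z}$.
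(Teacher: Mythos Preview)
Your argument is correct, and the opposite-orientation case analysis can indeed be completed along the lines you indicate: in every parity configuration of $m$ and $S$ one finds either a surviving fixed-point relation $w_k=w_k^{-1}$ or a surviving adjacent relation $w_k=w_{k+1}^{-1}$, and these contradict torsion-freeness and cyclic reducedness respectively. The cycle-closing step in the aligned case is also sound: on the orbit of $r_0$ under the shift, the $m/\gcd(d,m)-1$ surviving relations already chain all the letters on that orbit together, recovering the missing relation; once all $m$ relations hold, the two axes continue with identical edge labels at both ends of $\sigma$, hence coincide.

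Your route is, however, genuinely different from the paper's. The paper does not carry out a periodicity analysis at all. Instead it uses the single observation that since $f'$ is conjugate to $f$, the cyclically reduced form $W'$ read along the second axis must have the same abelianization as $W=W_0x$; since $W'$ contains $W_0$ and has length $L(W)$, the one remaining letter is forced to be $x$, so $W'\in\{W_0x,\,xW_0\}$, and in either case the overlap extends by one edge to length $L(W)$, after which the axes coincide. This gives a two-line proof of the aligned case but, as written, does not explicitly dispose of the possibility that the translation directions of $f$ and $f'$ point oppositely along $\sigma$ (so that $W'$ contains $W_0^{-1}$ rather than $W_0$). Your approach is longer but self-contained and handles that orientation issue head-on; the paper's approach is slicker for the aligned case and relies on the conjugacy hypothesis through the abelianization rather than through any combinatorics of shifts.
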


\begin{proof}
WLOG $W_0$ is an initial subword of $W$, hence WLOG there exists a decomposition $W=W_0x$, where $x$ is either a generator or an inverse of a generator in $\pi_1(M)$. Let $W'$ be a reduced and cyclically reduced conjugate of $f'$ containing $W_0$. Then the abelianization of $W$  implies that either $W'=xW_0$ or $W'=W_0x=W$. In either case, the intersection of the axes of $f$ and of $f'$ contains an interval of length $L(W)$, obtained by adding a single edge with label $x$ to an end  of the interval with label $W_0$. Hence the axes of $f$ and $f'$ coincide.
\end{proof}

\section{Quasi-Isometry of T and $\mathbb{H}^2$}

In this paper we work with the Poincar\'e disk model of the hyperbolic plane $\mathbb{H}^2$ and the Poincar\'e metric, given by 
$$ ds^2= \frac{4(da^2+db^2)}{(1-a^2-b^2)^2}$$
Any pair of points $a$ and $b$ in the Poincar\'e disk are joined by a unique geodesic, which is a part of the circle or the straight line passing through
$a$ and $b$ and orthogonal to the boundary of the Poincar\'e disc. The distance between $a$ and $b$ is given by
$$d(a,b)= 2 tanh^{-1} \frac{|a-b|}{1-\bar{b}a}$$
The topology defined by that metric on the Poincar\'e disc is equivalent to the Euclidean topology, but the Poincar\'e disc equipped with the Poincar\'e metric is complete.

We consider the standard metric on the hyperbolic surface $M$, given by the covering map from $\mathbb{H}^2$ to $M$.

Recall that a (not necessarily continuous) map $f$ from a metric space $(X_1, d_1)$ to a metric space $(X_2, d_2)$ is a quasi-isometry if there exists a constant $A \geq 1$ and non-negative constants $B$ and $C$  such that the following two conditions are satisfied.

\begin{enumerate}
\item
For any pair of points $p$ and $q$ in $X_1$, $$A d_1(p,q)- B \leq d_2(f(p),f(q)) \leq A d_1(p,q)+ B$$
\item
For any point $v \in X_2$ there exists a point $u \in X_1$ such that
$$ d_2(v, f(u)) \leq C$$
\end{enumerate}

Two metric spaces are called quasi-isometric if there exists a quasi-isometry between them.
Note that two compact metric spaces are always quasi-isometric.

Let $M$ be an orientable non-compact hyperbolic manifold without boundary such that the fundamental group of $M$ is finitely generated and let $T$ be the tree in $\mathbb{H}^2$ defined in the previous section.

Recall that a  map  $s: \mathbb{H}^2 \rightarrow T$ is $\pi_1(M)$-equivariant if  $g(s(x))=s(g(x))$ for any $g \in \pi_1(M)$ and $x \in \mathbb{H}^2$.

Consider the closure of the $2n$-gon $\widetilde{D}$ in $\mathbb{H}^2$, defined in the previous section. Its intersection with the tree $T$, which we denote by $T_D$, is a union of $2n$ geodesic segments which have one end in common. That endpoint is a vertex of the tree $T$, denote it by $v_D$. Define a $\pi_1(M)$-equivariant continuous map $s: \mathbb{H}^2 \rightarrow T$ as follows. Consider the $\epsilon$-neighborhood of the boundary of $\widetilde{D}$ for some small positive function $\epsilon$. Denote by $N_D$ the intersection of that neighborhood with $\widetilde{D}$.  For $x$ in the closure of 
$\widetilde{D}- N_D$ define $s(x)=v_D$. For $x \in N_D$ define $s(x)$ in two steps. First, project $N_D$ onto the intersection of $T_D$ with $N_D$. Second, stretch the image of the first step to fill the entire edge of the tree $T_D$ containing that image. Extend the map $s$ to the whole hyperbolic plane 
$\mathbb{H}^2$ by the action of the group $\pi_1(M)$. By construction, the map $s: \mathbb{H}^2 \rightarrow T$ is $\pi_1(M)$-equivariant and continuous. 

Note that the restriction of $s$ to any compact subset of $\mathbb{H}^2$ is a quasi-isometry even though $s$ might fail to be a quasi-isometry on the whole hyperbolic plane.

\section{Proof of Lemma 1}

Assume to the contrary that there exists a triangle $\Delta$ in $\mathbb{H}^2$ formed by geodesic lines $l,m$, and $n$, which are distinct lifts of the geodesic $\gamma$, such that the length of the side of $\Delta$ lying in $l$ is longer than $\gamma$. Note that $l$ is stabilized by some element $f$ in 
$\pi_1(F)$ which acts as a hyperbolic isometry of $\mathbb{H}^2$.

Let $T$ be the tree in $\mathbb{H}^2$ defined in Section 2 and let $W$ be a reduced and cyclically reduced word conjugate to $f$ in $\pi_1(M)$. 
Note that the geodesic lines $l, m$, and $n$  are  transversal to the lifts of the geodesics $x_i, 1 \le i \le n$ in $\mathbb{H}^2$.

Let $s: \mathbb{H}^2 \rightarrow T$ be the map defined in the previous section. Consider a very large disk $B$ in $\mathbb{H}^2$ which contains $\Delta$. As was explained in the previous section, the map $s$ restricted to $B$ is a quasi-isometry.
It can be arranged that the restriction of $s$ to the intersection of each of the lines $l, m$, and $n$ with $B$ is monotone, so $s$ maps those intersections onto  geodesics in $T$.

Lemma 2 implies that the length of any side of $s(\Delta)$ is strictly less then $2L(W)$. 

As $s$ is a quasi-isometry, the length of any side of $\Delta$ should be less than $A$(length of $ \gamma) + B$ for some constant $A \geq 1$ and a non-negative constant $B$. The careful analysis of the geometry of $\Delta$ given below, shows that $A=1$ and $B=0$, proving Lemma 1.

Indeed, let $p$ be the intersection of $l$ and $n$, and let $q$ be the intersection of $l$ and $m$. The length of $\gamma$ is equal to the length of the segment from $p$ to $f(p)$. As $f$ is an isometry, the length of that segment is equal to the length of the segment from $f(p)$ to  $f^2(p)$. 
By assumption, the segment from $p$ to $q$  is longer than $\gamma$, so the segment from $q$ to $f^2(p)$ is shorter than the segment from $p$ to $q$.
As $f$ is an isometry, the geodesics $n, f(n)$, and $f^2(n)$ make the same angle with $l$. Then as the segment from $q$ to  $f^2(p)$ is shorter than the segment from $p$ to $q$, the angle between $n$ and $l$ is equal to the angle between $f^2(n)$ and $l$, and the opposite angles between $m$ and $l$ are equal, it follows that $m$ and $f^2(n)$ intersect. 

Consider the intersections of the lifts of the geodesics $x_i, 1 \le i \le n$  with lines $l,m$, and $n$.
Let $b$ lifts of $x_i$ intersect both $l$ and $n$ to the left of the point $p$ and let $a$ lifts of $x_i$ intersect both $l$ and $n$ to the right of the point $p$. Then there are $a+b$ lifts of $x_i$ crossing $l$ and $n$, hence the length of the intersection $s(l) \cap s(n)$ is
$a+b$. Lemma 2 implies that $a+b < L(W)-1$. By a similar argument, the number $c$ of the lifts of $x_i$ intersecting both $l$ and $m$ is also less than $L(W)-1$. As $f$ is an isometry, there are $b$ lifts of $x_i$ crossing $l$ and $f^2(n)$ to the left of $f^2(p)$. Then the total number of the lifts of
$x_i$ crossing $l$ between the points $p$ and $f^2(p)$ is at most $a+b+c$, which is strictly less than $2L(W)$. However by construction, the number of the lifts of $x_i$ crossing  $l$ between the points $p$ and $f^2(p)$ should be equal to $2L(W)$. This contradiction  completes the proof of Lemma 1.

\section{Proof of Theorem 1}

Let $M$ be any orientable hyperbolic surface without boundary (possibly with infinitely generated fundamental group) and let $\gamma$ be a closed geodesic in $M$. Let $l, m$, and $n$ be distinct lifts of $\gamma$ to the hyperbolic plane which form a triangle. Let $\alpha$ generate the stabilizer of $l$ in 
$\pi_1(M)$. Let $g$ and $h$ be elements of $\pi_1(M)$ such that $m=gl$ and $n=hl$. Let $X$ be the cover of $M$ corresponding to the subgroup of $\pi_1(M)$  generated by $\alpha, g,$ and $h$. As $\pi_1(X)$ has $3$ generators, it follows that $X$ is an orientable not-compact hyperbolic surface without boundary which has finitely generated fundamental group. As $\pi_1(X)$ contains $\alpha$, it follows that $\gamma$ lifts to a closed geodesic $\gamma_X$ in $X$, and the geodesics $l, m,$ and $n$ are lifts of $\gamma_X$ to the hyperbolic plane. So applying Lemma 1, we obtain that each side of the geodesic triangle formed by $l, m,$ and $n$ is shorter that $\gamma_X$ which by construction has the same length as $\gamma$.

\section{Acknowledgment}

The author would like to thank Peter Scott for helpful conversations.


\begin{thebibliography}{20}

\bibitem{Be}
A. F. Beardon,
\emph{The Geometry of Discrete Groups}, 
Graduate Texts in Mathematics \textbf{91}(1983),
Springer, New York.

\bibitem{F-H-S}
M.Freedman, J. Hass, and P. Scott,
\emph{Closed Geodesics on Surfaces},
Bull. LMS, \textbf{14}(1982), 385-391.

\bibitem{Ga}
M. Gage,
\emph{Curve Shortening Makes Convex Curves Circular},
Invent. Math., \textbf{76}(1984), 357-364.

\bibitem{Gi}
R. Gitik,
\emph{Conjugate Words and Intersections of Geodesics in $\mathbb{H}^2$},
J. Topology and Analysis,
DOI:10.1142/S1793525320500107.

\bibitem{Gil}
J. Gilman,
\emph{A Geometric Approach to The Hyperbolic Jorgensen Inequality},
Bull. AMS, \textbf{16}(1987), 91-92.

\bibitem{Gr}
M. Grayson,
\emph{Shortening Embedded Curves},
Ann. of Math. (2), \textbf{129}(1989), 71-111.

\bibitem{H-S1}
J. Hass and P. Scott,
\emph{Intersections of Curves on Surfaces},
Israel J. Math, \textbf{51}(1985), 90-120. 

\bibitem{H-S2}
J. Hass and P. Scott,
\emph{Curve Flows on Surfaces and Intersections of Curves},
Proc. Symp. Pure Math., \textbf{54}(1993) Part 3, 415-421.

\bibitem{H-S3}
J. Hass and P. Scott,
\emph{Shortening Curves on Surfaces},
Topology, \textbf{33}(1994), 25-43.

\bibitem{Le}
J. Lehner,
\emph{A Short Course in Automorphic Functions}, 1966,
Holt, Rinchart, and Wiston, New York.

\bibitem{NC}
M. Neumann-Coto,
\emph{A Characterization of Shortest geodesics on Surfaces},
Algebraic and Geometric Topology, \textbf{1}(2001), 349-368.

\bibitem{Ra}
J. G. Ratcliffe,
\emph{Foundations of Hyperbolic Manifolds},
Graduate Texts in Mathematics \textbf{149}(1994),
Springer, New York.
 
\bibitem{Scott}
P. Scott,
\emph{The Geometries of $3$-Manifolds},
Bull. LMS, \textbf{15}(1983), 401-487.

\bibitem{St}
J. Stillwell,
\emph{Geometry of Surfaces}, 1992,
Springer, New York.

\end{thebibliography}
\end{document}